\documentclass[11pt,reqno]{amsart}

\usepackage{amsmath,amssymb}
\usepackage{latexsym}
\usepackage{epsfig}
\usepackage{cite}
\usepackage[english]{babel}
\usepackage{booktabs}
\usepackage{graphicx}
\usepackage{tikz}
\usepackage{pgfplots}
\usepackage{eurosym}
\usepackage[inner=4cm,outer=2cm]{geometry}
\usepackage{amsthm}

\usetikzlibrary{arrows,decorations.pathmorphing,backgrounds,positioning,fit,petri,decorations}
\usetikzlibrary{calc,intersections,through,backgrounds,mindmap,patterns,fadings}
\usetikzlibrary{decorations.text}
\usetikzlibrary{decorations.fractals}
\usetikzlibrary{fadings}
\usepackage{pgflibraryshapes}
\usetikzlibrary{lindenmayersystems}
\usetikzlibrary{shadings}
\usetikzlibrary{shadows}
\usetikzlibrary{shapes.geometric}
\usetikzlibrary{shapes.callouts}
\usetikzlibrary{shapes.misc}
\usetikzlibrary{spy}
\usetikzlibrary{topaths}
\usetikzlibrary{datavisualization}
\usetikzlibrary{datavisualization.formats.functions}

\newtheorem{teo}{Theorem}
\newtheorem{rem}{Remark}
\newtheorem{prop}{Proposition}
\newtheorem{tdef}{Definition}

\setlength{\textheight}{27pc}
\oddsidemargin-0.5truecm
\evensidemargin-0.5truecm
\textwidth17truecm
\textheight22truecm
\topmargin-.1truecm

\numberwithin{equation}{section}


\pagestyle{myheadings}
\setcounter{page}{1}
\pagenumbering{roman}
\setcounter{page}{1}
\pagenumbering{arabic}

\numberwithin{equation}{section}

\title{Concentration of measure for classical Lie groups}
\author{S.L.~Cacciatori$^{1,2}$ and P.~Ursino$^{3}$}

\address{$^1$ Department of Science and High Technology, Universit\`a dell'Insubria, Via Valleggio 11, IT-22100 Como, Italy}
\address{$^2$ INFN sezione di Milano, via Celoria 16, IT-20133 Milano, Italy}
\address{$^3$ Department of Mathematics and Informatics, Universit\`a degli Studi di Catania, Viale Andrea Doria 6, 95125 Catania, Italy}

\begin{document}
\footnote{The second author gratefully acknowledges partial support from the projects MEGABIT -- Universit\`{a} degli Studi di Catania, PIAno di inCEntivi per la RIcerca di Ateneo 2020/2022 (PIACERI), Linea di intervento 2}
\maketitle
\begin{abstract}

We study concentration of measure in Lie group actions.
We define the notion of concentration locus of a flag sequence of Lie groups.
Some examples of infinite group action on an infinite dimensional compact and non compact manifold show the role played by the trajectory of concentration locus.
We also provide some applications in Physics, which emphasize the role of concentration of measure in gravitational effects.

\end{abstract}

\section*{Introduction}

\noindent
 Our work has two primary goals.
The first consists of defining a tool, the \textit{concentration loci}, which allow to determine whether an infinite sequence of open sets yields a concentration of measure into a significantly small object, then in Proposition \ref{ConLocConcSpace} we determine a connection between the notion of concentration locus and the notion of concentration in the sense of Gromov \cite{Gro99}.
We then study the concentration inherited by Lie group actions on compact and non compact infinite dimensional spaces.

A second goal consists of investigating the geometric trajectory of a concentration locus while concentrating the measure by increasing the dimension.
The concentration of measure, especially in relation to an increase of  dimensions, has an interesting physical counterpart, which we shall discuss in two physical applications.

In Physics, after Einstein, spacetime is described by a four dimensional manifold with a dynamical Lorentzian geometry.
However, at present there is no reason to think that such a description should continue to work at sub-Planckian scales.
On the contrary, quantum effects are expected to become dominant at these scales, even though any theory proposed so far is mainly at a speculative level.
Quantum corrections are expected, for example, to solve the problem of curvature singularities in black hole physics, as predicted by singularity theorems.
There is no proof indicating exactly what happens below such scales.
For example, regarding curvature singularities of black holes, we may conjecture that they consist of an infinite energy density concentrating into a point.
Remaining at a speculative level, it may be useful to think of the dynamical process of such a collapse: for instance, one cannot exclude that as soon as in a very small region the energy density grows excessively, the number of spatial dimensions
increases more and more.
This would in turn lead to a ``spreading of directions'', yield a lowering of the energy density, and, maybe, smooth out the singularity, since, as it is well known, the Lebesgue measure concentrates on the boundaries when dimensions increase.\\
Further, the concentration of energy in higher dimensions could lead to new kind of forces induced from the ``infinite'' dimensional spacetime to the usually visible finite dimensional world, and could then appear as a mysterious force in lower dimensions.
We conjecture that one of these forces is indeed due to the phenomenon of measure concentration.
Of course, all our arguments are merely speculative, and to substantiate them we need to improve our understanding of the phenomenon of measure concentration, which does not depend only on the varying of dimensions but also on topology.

In Mathematics, a first illustration of the concentration of measure is an elementary calculus performed on spheres by Levy \cite{Le}, where it is shown that by increasing the dimensions of spheres the measure concentrates along the equator, an
$(n-1)$-sphere.
The work of Gromov and Milman \cite{GrMi} has given a strong impetus to deepen this area of research.
The reason must be sought in the fact that, by isoperimetric inequalities, they showed that this phenomenon is uniform for compact manifolds and depends only on the rate of curvature.
They also exhibit, using fixed point arguments, examples of some natural fibrations, which have continuous sections but have no uniformly continuous sections (see also \cite{RU20} for connections between extreme amenability and the existence of
global continuous sections).

One of the most important applications of concentration of measure is the existence of fixed points on compacta. A topological group
$G$ has a very strong fixed point property (i.e., Extreme Amenability), if any continuous $G$-action on a compact set has a fixed point (i.e.,
there exists a point $x$ such that $g\cdot x = x,\ \forall g\in G$).

Extreme amenability of automorphism groups of countable structures has been related with Ramsey property, a pure combinatorial phenomenon, in the exhaustive treatise of A. Kechris, V. Pestov and S. Todorcevic \cite{KPT}. Concentration of measure
and Milman-Gromov approach are instead, in a sense, a Ramsey property counterpart for groups that are smooth manifolds \cite{GrMi}.

The Gromov and Milman results allow to single out a fixed point starting from any part of the manifold where the group acts.
Compactness guarantees that fixed points exist but does not localize them and does not show how they become fixed along the incremental dimension process.

We have decided to give a notion (Definition \ref{def2}) of concentration which does not require uniformity as that one proposed in \cite{GrMi}.
Namely, we have emphasized the phenomenon of concentration of a specific sequence of sets, which contains a concentration locus, instead of saying that all sequences of sets concentrate.
However, we show that, under suitable conditions, our definition implies the uniform concentration.

In section \ref{Examples}, two examples show how the measure concentrates from a pure geometrical point of view.
In particular, we single out that the direction of concentration is essential in the non compact case (see Proposition \ref{prop:noncompact}). Remarkably, we find two different directions.
Following the first one, the measure concentrates to a circle, while following the second one, it shrinks to a point. Therefore, even in non compact cases, we may have fixed points but we do not have any uniformity. Indeed, following a wrong direction, we may
not find any fixed point.
In this last case, we have a concentration locus, where the orbits are confined, without fixed point property.
This puts the accent on the fact that extremely amenability and concentration are different phenomena.
Moreover, we show, under an asymmetric action, how the concentration phenomenon could appear.

In all cases it is clear that the concentration on the compact manifold depends on how the concentration locus of the group is mapped to the compact set
under the action of the group.

For our considerations, the relevant extreme amenable groups are only those ones obtained as limits of Levy families, \cite{DoTh}, \cite{GiPe}.

With regard to the connection between the concentration of measure and extreme amenability, the starting point is Theorem 7.1 in \cite{GrMi} which proves extreme amenability of the infinite unitary groups (observe that originally this proof requires an equicontinuous action, later this additional hypothesis has been shown to be unnecessary by Eli Glasner).

It is known that, if one works with the Hilbert-Schmidt topology and considers canonical embeddings, then $U(\infty)=SU(\infty)$ is extremely amenable. Nevertheless, if one considers finer topologies, as the  inductive limit topology
(notice that it is not at all obvious that the limit of a sequence of Lie groups produces Lie groups, see \cite{Gl} and \cite{Ne}), then
$SU(\infty)\subsetneq U(\infty)$, $U(\infty)/SU(\infty)\equiv S^1$ and, obviously, the action of $U(\infty)$ on such $U(1)$ cannot have fixed points, this implies that $U(\infty)$ cannot be extremely amenable (with the finer topology).

By the cited Theorem 7.1 with the inductive limit topology $(U(\infty),U(\infty))$ cannot be Levy since $U(\infty)$ is not extremely amenable.
Observe that, strictly following Gromov-Milman approach, the notion of Levy family does not apply to $U(\infty)$ since it is not a metric space.
However, direct limits of (paracompact) finite-dimensional manifolds with closed embeddings between
them are paracompact, hence normal and, therefore completely regular (see \cite{Lu}). Indeed, arbitrary direct limits of paracompact finite-dimensional smooth manifolds with smooth injective immersions are smoothly paracompact (have smooth partitions
of unity subordinate to any open cover) and hence normal, (see \cite{Gl}), which in turn implies that they are uniform spaces.
Pestov generalizes the notion of Levy family to uniform spaces \cite{Pe1}, so our example makes perfectly sense.
So, $(U(\infty),U(\infty))$ is an example of a group which has a concentration locus ($S^1$) which is not a point. Again, this stresses the difference between the fixed point property and measure concentration.

\smallskip

The paper is organized as follows.
\begin{itemize}
\item In section 1 we outline basic definitions and give first results.
\item In section 2 we show  examples of infinite group action on compact and non compact infinite dimensional manifold. In case of compact one, we prove the uniqueness of fixed point and the exact position.
\item In section 3 we make two physical applications of the above results.
\end{itemize}


\section{Background and first results}

In the first instance, we give the definition of Levy family as it was given by Gromov and Milman in \cite{GrMi}.
\begin{tdef}
For a set $A$ in a metric space $X$ we denote by $N^{\varepsilon}(A)$, $\varepsilon >0$, its e-neighborhood. Consider a family $(X_n,\mu_n)$ with $n:1,2, \dots$ of metric spaces $X_n$ with normalized borel measures $\mu_n$. We call such a family Levy if for any sequence of Borel sets $A_n\subset X_n$  $n:1,2, \dots$, such that $lim\ inf_{n\rightarrow \infty}\ \mu_n(A_n)>0$, and for every $\varepsilon >0$, we have
$lim_{n\rightarrow \infty}\ \mu_n(N^{\varepsilon}(A))=1$.
\end{tdef}
Following the considerations in the introduction we will adopt the following definitions:
\begin{tdef}\label{def2}
 Let $\{X_n, \mu_n\}_{n\in \mathbb N}$ be a family of connected metric spaces with metrics $g_n$, and $\mu_n$ measures w.r.t. which open set are measurable of non-vanishing measure. Assume the measures to be normalized, $\mu_n(X_n)=1$.
 Let $\{S_n\}_{n\in \mathbb N}$ be a family of proper closed subsets, $S_n\subset X_n$. Fix a sequence $\{\varepsilon_n\}_{n\in \mathbb N}$ such that $\varepsilon_n>0$,  $\lim_{n\to\infty}\varepsilon_n=0$, and let
 $\{U^{\varepsilon_n}_n\}_{n\in \mathbb N}$ be the sequence of tubular neighbourhoods of $S_n$ of radius $\varepsilon_n$. We say that the measure concentrates on the family
 $\{S_n\}$ at least at rate of $\varepsilon_n$ if
 \begin{align}
 \lim_{n\to\infty} \mu_n(X_n-U^{\varepsilon_n}_n)=0.
 \end{align}
 We will shortly say that the measure concentrates on $S_n$ and will call it metric concentration. In particular, when $X_n$ are manifolds, we call $S_n$ a concentration locus if it is contained in a submanifold of strictly positive codimension for
 any $n$.
 Moreover, if such a sequence $\varepsilon_n$ converges to 0 at rate $k$ (so that $\lim_{n\to\infty} n^k\varepsilon_n=c$ for some constant $c$), we say that the
 measure concentrates on the family $\{S_n\}$ at least at rate $k$.
\end{tdef}

Notice that in general we may have $\mu_n(S_n)=0$.
Moreover, with this definitions we do not need any notion of convergence of $S_n$ to a final subset. $S_n$ just gives a ``direction of concentration''. Also, our definitions do not pretend to provide any optimality in concentration: it can happen that for
a given sequence of $S_n$ it exists a sequence of proper subsets $S'_n\subset S_n$ on which we still have concentration.
A special example of Definition \ref{def2} consists in the case $X_n=X$ and $S_n=S$ where $X$ is a metric space, $\{\mu_n\}_{n\in\mathbb N}$ a sequence of normalized measures on $X$, compatible with the metric of $X$ and $S\subset X$ a proper closed subset of $X$.

\begin{rem}
  It is apparent that the above definition has some similarities with the usual definition of concentration.
  However, they are different, since our definition is centered on a specific sequence of sets, the concentration locus, while the standard one on generic open sets of positive measure.
  Even the usual notion of concentration has a rate of concentration, represented by the concentration function (see for example \cite{ledoux} for definition and an extensive discussion).
  Again, this notion is based on generic open measure of positive sets instead of a specific sequence of closed sets.
  Roughly speaking the standard definition check if a sequence concentrates and, the concentration function, how it concentrates, our definition says the same thing but specifies where it concentrates.

An interesting rate of concentration is determined by all sequences $\varepsilon_n$ which run to 0 with order less than $1/\sqrt{n}$
(the corresponding concentration function is called ``normal concentration'' \cite{ledoux} ).

\end{rem}

\begin{tdef}{(Gromov-Milman \cite{GrMi})}\\
  A space with a metric $g$ and a measure $\mu$, or an mm-space, is a triple $(X,\mu,g)$, consisting of a set $X$, a metric $g$ on $X$ and a probability Borel measure on the metric space $(X,g)$.
\end{tdef}

We want now to provide a point of contact between the notion of concentration locus and the notion of a sequence of mm-spaces $(X_n)_{n\in N}$ that concentrates to an mm-space $X$.
As far as we know, the latter notion has been originally described in the Gromov's Green Book (\cite{Gro99}, Chapter 3$\frac{1}{2}$H).
Actually, for the definition of observable distance between two mm-spaces $d_{conc}(X,Y)$ and for all notations and definitions needed for the definition of $d_{conc}$, we refer to a more concise presentation in \cite{Shi} or \cite{Sc}.

A sequence of mm-spaces $(X_n)_{n\in N}$ is said to concentrate to an mm-space $X$ if

$$\lim_n d_{conc}(X_n,X)=0$$

Our claim consists in proving, roughly speaking, that whenever a concentration locus has a metric space as limit, the concentration locus concentrates in Gromov sense.
Let $(X,\mu,g)$ and $(X_n,\mu_n,g_n)$ mm-spaces. $(X_n,g_n)$ converges metrically as sequence of metric spaces to $(X,g)$ and
$W_2(\mu_n,\mu)$ converges to 0, in this case we say that $(X_n,\mu_n,g_n)$ converges to $(X,\mu,g)$..
More in detail, assume $X_i\subseteq X_j$ for $i<j$ consider as embedding the natural ones. Let $X=\overline{\bigcup X_i}$ the completion of $\bigcup X_i$ and suppose $X$ is metrizable with metric $d(x,y)=\lim \ d(x_i,y_i)$ with
$\{x_i\}$  and $\{y_j\}$ converging pointwise to $x$ and $y$, respectively. This is exactly what we mean by ``$(X_n,g_n)$ converges metrically as sequence of metric spaces to $(X,g)$".

An example for this $X_i$, token from \cite{Pe1}, is given by the sequence of unitary groups $U_i$, with the natural embeddings. The limit metric is determined by the Hilbert-Schmidt norm $\|\cdot \|_2$, defined in a coordinate-free fashion by
$ \|A\|_2 =\sqrt{{\rm tr}(A^*A)}$ for $ A\in Mat(n\times n)$.
This closes to the set of compact operators $K$ such that the eigenvalues of $\sqrt{K^* K}$ are the coordinates.
If this sequence is in $\ell^2$, then $K$ is said to be of Schatten 2-class. The operators of Schatten 2-class are exactly those
for which the Hilbert-Schmidt norm, as given by the above equation, makes sense.

We denote by $U(\infty)_2$ the collection of all unitary operators of the form $I+K$, where $I$ is the identity and $K$ as above.
The Hilbert-Schmidt metric on the group $U(\infty)_2$ , given by the rule $d_{HS}(u,v) =\|u-v\|_2$, is (well-defined and) bi-invariant.
With this metric, $U(\infty)_2$ is a Polish group.
For every $n\in N$, the unitary group $U(n)$ of rank $n$ belongs into $U(\infty)_2$ in the usual way:
by enlarging the matrix with 1 along the diagonal. The union $\bigcup U(n)$ is dense in $U(\infty)_2$.
$U(\infty)_2$ is the completion of the abstract group $\bigcup U(n)$  with regard to the Hilbert-Schmidt metric.

Let $(S_n,\nu_n)$ be a concentration locus for $(X_n,g_n,\mu_n)$, with a defined measure $\nu_n$, for such a sequence. Let $(S,g)$ a subspace of $(X,g)$ and $(S,\nu,g)$ an mm-space such that $(S_n,\nu_n,g_n\downharpoonright_{S_n})$ converges to $(S,\nu,g)$.

The notion and the definition of optimal transport can be found in many text books, for a quick exposure, in \cite{Sc} or in \cite{Shi}, for an extensive treatise you can refer to \cite{VilBook}.

In the following we shall use optimal transports $p$ from $(X,d,\mu)$ to $S$ such that $d(x,p(x))=d(x,S)$, we denote such transports as "transports to the border".

\begin{prop}\label{ConLocConcSpace}

  Let $(X_n,\mu_n,g_n)$ a sequence of manifolds with bounded curvature which converges to $(X,\mu,g)$ , $(S_n,\nu_n)$ a concentration locus for $(X_n,\mu_n,g_n)$ and, for each $n$, $p'_n$ be a continuous optimal transport to the border from $\mu_n$
  to $\nu_n$ such that $W_2(\nu_n,\mu_n)$ converges to 0.
  If $(S_n,\nu_n,g_n\downharpoonright_{S_n})$ converges to a manifold $(S,\nu,g)$ with bounded curvature, then $(X_n,\mu_n,g_n)$ concentrates to $(S,g,\nu)$ in Gromov sense.
\end{prop}
\begin{proof}
This proof does not make any use of Levy family either condition of first eigenvalue \cite{GrMi}.

We use \cite{Shi} characterization for concentration Corollary 5.36 or Theorem 2.2 \cite{Sc} .

Let $p'_n$ the continuous (in particular Borel) optimal transport of $X_n$ on $S_n$, $i_n$ the isometric injection of $S_n$ in $S$ and $p_n=i_n\circ p'_n$. By our hypothesis $\sharp p'_n\mu_n=\nu_n$.

$W_2(\mu_n,\nu)\le W_2(\mu_n,\nu_n)+W_2(\nu_n,\nu)\le$ (by hypothesis for large enough $n$) $\le \epsilon$.

By Lemma 9.12 \cite{Shi} $\mu_n$ weakly converges to $\nu$ and this verifies condition 1 of Corollary 5.36 of Shioya.

Regarding condition 2 of Corollary 5.36 of Shioya\cite{Shi}, it is sufficient to prove that for a given $f\in Lip_1(S,g)$ there exists for a sufficient large $n$ an $f'_n\in Lip_1(X_n)$ such that $me_{\mu_n}(f'_n,f\circ p_n))<\epsilon$. On the other side for a
given $f'_n\in Lip_1(X_n,g)$ we have to prove that there exists an $f\in Lip_1(S,g)$ such that $me_{\mu_n}(f'_n,f\circ p_n))<\epsilon$.

First we consider $\tilde{f'}_n=f\mid_{S_n}$ which is in $Lip_1(S_n)$.
By \cite{Kirsz} we can extend $\tilde{f'}_n$ to an $f'_n\in Lip_1(X_n)$. Let us consider open sets $U^{\epsilon_n}_n$ around $S^n$.
Choose an $n_1$ such that for all $n>n_1$ $\mu_n(X_n\setminus U^{\epsilon_n}_n)<\epsilon$. Observe that $f\circ p_n$ inside $S^n$ coincides with $f'_n$ since optimal transport doesn't move points inside the destination support.
Therefore we have to check the measure of $\{x\mbox { such that } \mid f'_n(x)-f\circ p_n\mid\ge\epsilon\}$ just outside of $S_n$. In $X_n\setminus U^{\epsilon_n}_n$ the measure is less than $\epsilon$.
While inside $U^{\epsilon_n}_n\setminus S_n$ for each point $x$ $g_n(x,p_n'(x))=g_n(x,S_n)$ which is less than $\epsilon_n$.
We can choose $n_2>n_1$ in such a way $\epsilon_n<\epsilon$, since $f'_n\in Lip_1(X_n)$ we get $\mid f'_n(x)-f\circ p_n (x)\mid <\epsilon$.

On the other side, consider a restriction $\tilde f'_n$ of $f'_n\in Lip_1(X_n,g)$ to $S_n$ then again by \cite{Kirsz} extend $\tilde f'_n$ by an $f\in Lip_1(S)$. Using a similar argument as above, we get $\mid f'_n(x)-f\circ p_n (x)\mid <\epsilon$ inside $U^{\epsilon_n}_n\setminus S_n$, $\mu_n(X_n\setminus U^{\epsilon_n}_n)<\epsilon$ and $x\in S_n$ left fixed by the optimal transport.

\end{proof}

\begin{rem}
  In an on-going paper we proved that a large class of Compact Lie Groups\footnote{For example, $SU(n)$ belongs to this class} satisfies the conditions for applying Proposition \ref{ConLocConcSpace}. In case of Hilbert Schmidt distance, $S$ is a point.
  Indeed, Schneider proved the following Pestov's conjecture [\cite{Pe1} , Conjecture 7.4.26]:

If $G$ is a metrizable topological group, equipped with a compatible right-invariant metric $d$,
and $(K_n)_{n\in N}$ is an increasing sequence of compact subgroups such that
  \begin{itemize}
    \item the union $\bigcup_{n\in N}K_n$ is everywhere dense in $G$, and
    \item $(K_n, d\mid K_n, \mu_n)_{n\in N}$ concentrates to a fully supported, compact mm-space $(X, d_X, \mu_X)$,
where $\mu_n$ denotes the normalized Haar measure on $(K_n)_{n\in N}$,

   \end{itemize}
then the topological space $X$ supports the structure of a $G$-flow, with respect to which it
admits a morphism to every $G$-flow.

\end{rem}

As a consequence, whenever $X_n$ are compact and the limit of a concentration locus is minimal, by Schneider theorem, the limit space $S$ is an Universal Minimal flow.
In case it should be a point the sequence is a Levy family hence $X$ is extremely amenable (\cite{Shi},\cite{Sc}). Moreover, under conditions of Theorem \ref{ConLocConcSpace}, if a concentration locus concentrates to a point then $X_n$ is a Levy family.


\section{Concentration of measure on infinite dimensional compact sets}\label{Examples}
The extreme amenability of a group is related to the concentration of the measure induced by the Levy family on any compact Lie group.
In order to improve our intuition on the phenomenon, we  will consider two simple examples of infinite dimensional compact sets and the concentration of measure on them associated to the continuous action of certain infinite dimensional groups.


\subsubsection{An elementary example}
Let $\mathcal H$ be a real infinite dimensional Hilbert space, with scalar product $(|)$ and norm $\|\ \|$. We define the spheres and the closed balls
\begin{align}
& S^\infty_r:=\{x\in \mathcal H | \|x\|=1 \}, \\
& B^\infty_r:=\{x\in \mathcal H | \|x\|\leq1 \}.
\end{align}
Both are closed and bounded but none of them is compact in $\mathbb H$ with the strong topology. Therefore, let us consider the weak topology, the weakest topology which makes the maps of the topological dual $\mathcal H'$ of $\mathcal H$
continuous. Its open set are generated by open sets of the form
\begin{align}
 U_{v,\varepsilon}(x_0)=\{x\in \mathcal H | |(v|x-x_0)|<\varepsilon \},
\end{align}
under finite intersections and arbitrary unions, where $v\in \mathcal H$.\\
If $\mathcal H_v\equiv v^\perp$, $v\neq 0$, then
\begin{align}
 U_{v,\varepsilon}(x_0)=\bigcup_{\alpha\in \mathbb R, {|\alpha|<\varepsilon}} \{x_0+\alpha \frac v{\|v\|^2} +\mathcal H_v\}.
\end{align}
Of course, $S^\infty_r$ is not closed in the weak topology, and $\overline{S^\infty_r}=B^\infty_r$. Indeed, let us first consider $y\in B^\infty_r$, and fix an orthonormal complete system $\{e_j\}_{j=1}^\infty$ for $\mathcal H$
such that $y=\|y\| e_1$. Therefore, the sequence
\begin{align}
 & x_n:=y+\sqrt {r^2-|y|^2} e_n, \quad n>1
\end{align}
is in $S^\infty_r$ and satisfies
\begin{align}
 \lim_{n\to\infty} (v|y-x_n)=0
\end{align}
for any $v\in \mathcal H$, which means that $y$ is an accumulation point for $S^\infty_r$ in the weak topology. On the opposite, if $y\notin B^\infty_r$, then $\|y\|>r$ and, for any $x\in S^\infty_r$,
\begin{align}
 |(y|y-x)|=|\|y\|^2 -(y|x)|\geq |\|y\|^2 -|(y|x)||\geq \|y\|(\|y\|-\|x\|)=\|y\|(\|y\|-r)>0,
\end{align}
so that $y$ is isolated from the ball.

\

\noindent Let us now fix a complete orthonormal system $\{e_j\}_{j=1}^\infty$ for $\mathcal H$. Set
\begin{align}
 \mathbb R^N:=\mathbb R e_1\oplus \cdots \oplus \mathbb R e_N.
\end{align}
Notice that its orthogonal complement in $\mathbb H$ is
\begin{align}
 {\mathbb R^N}^\perp=\mathcal H_{e_1} \cap \cdots \cap \mathcal H_{e_N}.
\end{align}
We can consider the linear action of the group $SO(N)$ on $\mathcal H$ defined by its fundamental action on $\mathbb R^N$ and by its trivial action on ${\mathbb R^N}^\perp$. Let us consider the sequence of canonical embeddings
\begin{align}
 \ldots \subset SO(N) \subset SO(N+1) \subset SO(N+2) \subset \ldots
\end{align}
and its limit $SO(\infty)$ w.r.t. to any topology which makes the embeddings $SO(N)\hookrightarrow SO(\infty)$ continuous for all $N$. Let be $K=B^\infty_r$ endowed with the weak topology, so that $K$ is a compact set. For any given
point $x\in B^\infty_r$ we define the measures $\mu^x_N$ over $K$ induced by the action of $SO(N)$ over $K$ as follows. If
\begin{align}
 f: SO(N)\times \mathcal H \longrightarrow \mathcal H
\end{align}
is the action of $SO(N)$ on $\mathcal H$ defined above,
it is clear that its restriction over $SO(N)\times B^\infty_r$ defines an action of $SO(N)$ on $B^\infty_r$. Given a subset $A\subseteq B^\infty_r$, let us consider the subset
$SO(N)^x_A$ of $SO(N)$ defined by
\begin{align}
 SO(N)^x_A:=\{ g\in SO(N) | f(g,x)\in A \}.
\end{align}
We say that $A$ is measurable if $SO(N)^x_A$ is measurable with respect to the normalized Haar measure $\mu_{SO(N)}$ of $SO(N)$,\footnote{The normalization is $\mu_{SO(N)}=1$.} and put
\begin{align}
 \mu^x_N(A):=\mu_{SO(N)} (SO(N)^x_A).
\end{align}

\begin{rem}
  In \cite{CU22} we calculate explicitly a concentration locus for classical compact Lie groups, in particular for $SO(n)$.
\end{rem}

\begin{teo}
The following holds:
\begin{align}
 \lim_{N\to\infty} \mu^{e_1}_N=D_0
\end{align}
where $D_0$ is the Dirac measure with support in $0$.
\end{teo}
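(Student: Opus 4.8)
The plan is to compute the pushed-forward measure $\mu^{e_1}_N$ explicitly on $K = B^\infty_r$ and show that, restricted to any fixed weakly-open neighbourhood $U$ of $0$, it has measure tending to $1$. The key observation is that $SO(N)$ acts on $\mathcal H = \mathbb R^N \oplus (\mathbb R^N)^\perp$ by rotating only the first $N$ coordinates and fixing the rest. Hence $f(g, e_1) = g e_1 \in S^{N-1} \subset \mathbb R^N \subset \mathcal H$, i.e. the orbit of $e_1$ is the unit sphere $S^{N-1}$ sitting inside $\mathbb R^N$, and the measure $\mu^{e_1}_N$ is precisely the pushforward of the normalised Haar measure under $g \mapsto g e_1$, which is the normalised rotation-invariant (Lebesgue) measure $m_{N-1}$ on $S^{N-1}$, viewed as a measure on $K$ via the inclusion $S^{N-1} \hookrightarrow B^\infty_r$.

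**Next I would** reduce weak convergence $\mu^{e_1}_N \to D_0$ to a concentration statement. A basis of weak neighbourhoods of $0$ in $K$ is given by finite intersections $U = \{x \in B^\infty_r : |(v_i|x)| < \varepsilon,\ i=1,\dots,k\}$; since $S^{N-1} \subset \mathbb R^N$, only the components $(e_j|x)$ for $j \le N$ matter, and for $N$ large each $v_i$ is arbitrarily well approximated by its projection onto $\mathbb R^N$, so it suffices to show that for every fixed finite $k$ and every $\varepsilon > 0$,
\begin{align}
 m_{N-1}\Bigl(\bigl\{\omega \in S^{N-1} : |\omega_j| < \varepsilon,\ j=1,\dots,k \bigr\}\Bigr) \xrightarrow[N\to\infty]{} 1.
\end{align}
This is the classical concentration of the uniform measure on the high-dimensional sphere near any fixed codimension-$k$ "equatorial" slice: a random point on $S^{N-1}$ has $\mathbb E[\omega_j^2] = 1/N$, so by Chebyshev (or by the explicit marginal density $\propto (1-t^2)^{(N-3)/2}$) the probability that $|\omega_j| \ge \varepsilon$ is $O(1/(N\varepsilon^2))$, and a union bound over $j=1,\dots,k$ gives the claim. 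One then checks that testing against bounded weakly-continuous functions follows, since such functions are, on the bounded set $K$, uniformly approximable by functions of finitely many coordinates.

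**I should be careful about** one genuine subtlety: whether we want weak-$*$ convergence of measures tested against \emph{weakly continuous} bounded functions on $K$ (the natural notion here, since $K$ is compact in the weak topology) or against coordinate functionals. The honest route is: fix $\phi \in C(K)$ with $K$ weakly compact; given $\delta$, by compactness and the definition of the weak topology there are $v_1,\dots,v_k \in \mathcal H$ and $\eta > 0$ such that $|\phi(x) - \phi(0)| < \delta$ whenever $|(v_i|x)| < \eta$ for all $i$; then $|\int \phi\, d\mu^{e_1}_N - \phi(0)| \le \delta + 2\|\phi\|_\infty\,\mu^{e_1}_N(K \setminus U_{v,\eta})$, and the second term $\to 0$ by the sphere-concentration estimate above applied to the finitely many vectors $v_i$ (approximating them by their $\mathbb R^N$-projections, whose error contributes at most $\|v_i\|_{(\mathbb R^N)^\perp} \cdot r \to 0$).

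**The main obstacle** is not the concentration inequality, which is standard, but packaging the interchange between "weak neighbourhoods of $0$ in the infinite-dimensional $K$" and "equatorial slices of the finite-dimensional sphere $S^{N-1}$": one must control the tail of each fixed $v_i$ outside $\mathbb R^N$ uniformly, and handle that the number $k$ of constraints defining a weak neighbourhood is fixed while $N \to \infty$. Once that bookkeeping is in place the theorem follows immediately, and in fact the argument shows the measures concentrate not merely at $0$ but that \emph{any} fixed weak neighbourhood of $0$ eventually carries all the mass — the infinite-dimensional analogue of the concentration of $S^{N-1}$ near a point in the weak topology, mirroring the finite-dimensional equatorial picture used throughout Section 2.
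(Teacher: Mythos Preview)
Your argument is correct and rests on the same underlying fact as the paper's proof: the orbit of $e_1$ under $SO(N)$ is the unit sphere $S^{N-1}\subset\mathbb R^N$, the induced measure is the normalised Lebesgue measure there, and classical concentration on high-dimensional spheres forces each fixed coordinate to be small with high probability.

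The packaging differs. The paper argues iteratively: first concentrate on the equator $S^\infty_1\cap\mathcal H_{e_1}$ orthogonal to $e_1$, then repeat with $e_2$, etc., obtaining for each finite $k$ concentration on $S^\infty_1\cap(\mathbb R^k)^\perp$; then take weak closures and intersect to reach $\{0\}$. You instead work directly with the definition of weak-$*$ convergence of measures on the compact $K$: fix a basic weak neighbourhood of $0$ determined by finitely many vectors $v_1,\dots,v_k$, approximate each $v_i$ by its $\mathbb R^N$-projection, and use a union bound over the $k$ coordinate constraints simultaneously. Your route is more explicit about what ``$\mu^{e_1}_N\to D_0$'' means (convergence of $\int\phi\,d\mu^{e_1}_N$ for every $\phi\in C(K)$) and handles the tail $\|v_i\|_{(\mathbb R^N)^\perp}$ cleanly; the paper's iterative picture is terser but leaves the passage from ``concentration on $S^\infty_1\cap(\mathbb R^k)^\perp$ for every $k$'' to ``convergence to $D_0$'' somewhat implicit. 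Both arrive at the same place with the same essential estimate.
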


\begin{proof}
The image of $e_1$ under the action of $SO(N+1)$ is
\begin{align}
f(SO(N+1),e_1)=S^N_{1}=S^\infty_{1}\cap \mathbb R^N=\{ y=\sum_{j=1}^{N+1} y_j e_j | \|y\|=1 \}.
\end{align}
It follows immediately that
\begin{align}
\mu^{e_1}_{N+1}=D_{S_1^N}
\end{align}
the Dirac measure uniformly supported on $S_1^N$ (so that it is the normalized Lebesgue measure when restricted to the support). Choosing for any $N$ the spherical polar coordinates with azimuthal axes defined by $e_1$, it is a standard argument
(see \cite{Le}) to show that when $N\to\infty$ the measure concentrates on $S^\infty_1 \cap \mathcal H_{e_1}$. This is exactly the image of a concentration locus of the $SO(N+1)$ family: after identifying $SO(N+1)$ with the $SO(N)$ Hopf fibration
over $S^N$ and $SO(N)$ with the isotropy group of $e_1$ under the action of $SO(N)$, following the results in \cite{CU22}, it is easy to see that a concentration locus on the group can be identified with the restriction of the Hopf fibration to an equator of $SO(N+1)/SO(N)\simeq S^N$.
This family of fibrations is mapped to the family of the equators of the orbits of $SO(N+1)$ through $e_1$. \\
Iterating the procedure w.r.t. $e_2\in S^\infty_1 \cap \mathcal H_{e_1}$ and so forth, we see that for any finite $k$ the measure
concentrates on $S^\infty_1 \cap {\mathbb R^k}^\perp$. Taking the closure of these sets and noting that $\bigcap_{k\in \mathbb N} \mathcal H_k=0\in B^\infty_1$. We get the assertion.
\end{proof}
\begin{rem}\rm
 The proof not only gives the assertion, but also shows that the concentration locus has growing codimension in the family of $SO(N+1)$, and when we take the limit its image on the spheres collapses to the fixed point $0$. So, while the concentration
locus has no limit on the groups, it has a well defined limit on the compact spheres on which the groups act.
\end{rem}
This phenomenon of concentration of measure on a compact space can be also interpreted as an optimal transport of mass along geodesics of measures, which converge to $D_0$, see, for example, \cite{LoVi}. Such flow of measures can be geometrically
visualized in our case as a geodesic flow transporting the equators toward the nord pole $e_1$ along meridian geodesics. In particular, a recent work by Schneider
\cite{Sc} uses this approach to prove a conjecture by Pestov \cite{Pe1}.
\begin{prop}
If $x\in B^\infty_r$, then
\begin{align}
 \lim_{N\to\infty} \mu^{x}_N=D_0.
\end{align}
\end{prop}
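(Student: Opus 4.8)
The plan is to reduce the general point $x\in B^\infty_r$ to the special case $x=e_1$ already treated in the Theorem. First I would observe that the claim is trivial when $x=0$, since then $f(g,0)=0$ for every $g$, so $\mu^0_N=D_0$ for all $N$. Assume therefore $x\neq 0$ and write $r_0=\|x\|\le r$. The key point is that $SO(N)$ acts on the sphere $S^\infty_{r_0}\cap\mathbb R^N$ transitively (for $N$ large enough that $x\in\mathbb R^{N}$), so the orbit of $x$ under $SO(N+1)$ is exactly $S^N_{r_0}:=S^\infty_{r_0}\cap\mathbb R^{N}=\{y:\|y\|=r_0,\ y_j=0\ \forall j>N+1\}$, and, by invariance of the Haar measure, $\mu^x_{N+1}=D_{S^N_{r_0}}$, the uniform (normalized Lebesgue) measure on that finite-dimensional sphere of radius $r_0$.

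Once this is established, the argument of the Theorem applies essentially verbatim: choosing spherical polar coordinates on $S^N_{r_0}$ with azimuthal axis $e_1$, the standard computation (as in \cite{Le}) shows that as $N\to\infty$ the measure $\mu^x_N$ concentrates on $S^\infty_{r_0}\cap\mathcal H_{e_1}$; iterating with respect to $e_2,e_3,\ldots$ shows that for each finite $k$ the limit measure is supported on $S^\infty_{r_0}\cap{\mathbb R^k}^\perp$. Taking weak closures and using $\bigcap_{k}{\mathbb R^k}^\perp=\{0\}$ together with the fact (established in the preliminary discussion of this subsection) that the weak closure of $S^\infty_{r_0}$ is $B^\infty_{r_0}$, while the intersection of the nested closed balls $\overline{S^\infty_{r_0}\cap{\mathbb R^k}^\perp}$ shrinks to $\{0\}$, we conclude $\lim_{N\to\infty}\mu^x_N=D_0$.

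Alternatively, and perhaps more cleanly, I would note that the radius $r_0$ plays no essential role: the dilation $y\mapsto (r_0/1)\,y$ intertwines the $SO(N)$-actions on $S^\infty_1$ and $S^\infty_{r_0}$ and is weakly continuous on bounded sets, so the concentration statement for radius $r_0$ follows immediately from the Theorem (the case $r_0=1$) by pushing forward. Thus the only genuine content beyond the Theorem is the transitivity remark identifying $\mu^x_{N+1}$ with the uniform measure on the orbit sphere $S^N_{r_0}$, which holds because any two vectors of equal norm in $\mathbb R^{N+1}$ are related by an element of $SO(N+1)$ and the induced measure is the Haar pushforward, hence rotation-invariant on the orbit.

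\textbf{Main obstacle.} There is no serious obstacle; the one point requiring a little care is the weak-topology bookkeeping at the end — namely checking that concentration on each $S^\infty_{r_0}\cap{\mathbb R^k}^\perp$ for all finite $k$ really forces the weak limit to be $D_0$ and not merely some measure supported on $\bigcap_k\overline{S^\infty_{r_0}\cap{\mathbb R^k}^\perp}$ in a weaker sense. This is handled exactly as in the proof of the Theorem: for any weak neighbourhood $U_{v,\varepsilon}(0)$ of $0$ one has ${\mathbb R^k}^\perp\subset U_{v,\varepsilon}(0)$ once $k$ is large enough that the component of $v$ in ${\mathbb R^k}^\perp$ has norm less than $\varepsilon/r_0$, so the mass outside any fixed weak neighbourhood of $0$ tends to $0$.
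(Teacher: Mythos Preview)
Your approach is correct and matches the paper's own proof, which simply records that the case $x\neq 0$ ``follows exactly the same line as above'' and that the case $x=0$ is trivial since $\mu^0_N=D_0$ for all $N$. One small imprecision worth tightening: a general $x\in B^\infty_r$ need not lie in any finite $\mathbb R^N$, so the orbit under $SO(N+1)$ is the shifted sphere $S^N_{\|P_{N+1}x\|}+Q_{N+1}x$ (with $P_{N+1}$ the projection onto $\mathbb R^{N+1}$ and $Q_{N+1}=I-P_{N+1}$) rather than $S^N_{\|x\|}$ itself; since $Q_{N+1}x\to 0$ weakly and $\|P_{N+1}x\|\to\|x\|$, your concentration argument goes through unchanged after this correction.
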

\begin{proof}
 If $x\neq 0$ then the proof follows exactly the same line as above. The case $x=0$ follows from $\mu^{0}_N=D_0$ for any $N$.
\end{proof}

Notice that we can get a completely geometrical picture of this phenomenon: acting on $x\in K=B^\infty_r$, $SO(\infty)$ generates $K$ as a union of (infinitely many) layers like an onion when $\|x\|$ varies in $[0,r]$. In fact, $SO(\infty)$ acts transitively on
each layer, but all layers have a common center, the origin $0$, which is left fixed by each $SO(N)$. Since the normalised measure induced on each $S^N_{|x|}$ is always the same as for $S^N_1$ if $x\neq 0$, it is natural to expect that the measure
will concentrate on the common accumulation point, which is just $0$.

\

The same result can be extended moving to a complex Hilbert space $\mathcal H^{\mathbb C}$ and acting with the groups $SU(N)$, where projective spaces replace the spheres.
More precisely, the reference compact for $SU(N+1)$ is $\mathbb {CP}^N$. Again, for each $N$ and $x\in K$ (the weak closure of the $\mathbb {CP}^\infty$) we can define the measure $\mu^x_N$ and get
\begin{prop}\label{prop:fixed}
If $x\in K$, then
\begin{align}
 \lim_{N\to\infty} \mu^{x}_N=D_0.
\end{align}
\end{prop}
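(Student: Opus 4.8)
The plan is to mimic the proof of Theorem 3 (the real Hilbert space case) word for word, replacing spheres with complex projective spaces for $SU(N)$ and with odd spheres for $U(N)$, and invoking the concentration loci found in Section \ref{sec:unitary} in place of the elementary "measure concentrates on the equator" fact used there. First I would fix a complete orthonormal system $\{e_j\}$ for $\mathcal H^{\mathbb C}$, let $\mathbb C^N = \mathbb C e_1 \oplus \cdots \oplus \mathbb C e_N$, and let $SU(N)$ (resp. $U(N)$) act linearly on $\mathcal H^{\mathbb C}$ via the fundamental representation on $\mathbb C^N$ and trivially on $(\mathbb C^N)^\perp$. As in the real case one checks that $K$, the weak closure of $\mathbb{CP}^\infty$ (resp. $S^\infty$) inside $B^\infty_1$, is weakly compact, that the action restricts to $K$, and that $\mu^x_N$ is well defined by $\mu^x_N(A) = \mu_{G_N}(\{g : f(g,x)\in A\})$ with $\mu_{G_N}$ the normalized Haar measure.

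The core step is the orbit analysis. Acting with $U(N+1)$ on a nonzero $x$, the orbit through $x/\|x\|$ is $S^{2N+1} \cap \mathbb C^{N+1}$, so $\mu^x_{N+1} = D_{S^{2N+1}_{\|x\|}}$ is the uniform (hence normalized Lebesgue) measure on that odd sphere; acting with $SU(N+1)$ and then projecting, the analogous statement holds with $\mathbb{CP}^N$. Then I would invoke Proposition \ref{sec:unitary} (the $SU$ concentration result) together with the elementary sphere computation $\int_0^{\pi/2-\varepsilon}\cos\phi\,\sin^{2N-1}\phi\,d\phi = \cos^{2N}\varepsilon/(2N)$ already used there, to conclude that as $N\to\infty$ the measure concentrates on the hyperplane at infinity $S_N \cong \mathbb{CP}^{N-1}$ inside $\mathbb{CP}^N$ (for $U(N)$, on a bi-equator, or equivalently on $S^{2N-1}\cap \mathcal H_{e_1}$ after a choice of polar coordinates with axis $e_1$). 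Exactly as in Theorem 3 one now iterates: starting from $e_2$ lying in this first concentration locus, the measure concentrates on $S \cap (\mathbb C^2)^\perp$, and so on, so for every finite $k$ the limit measure is supported on the weak closure of $K \cap (\mathbb C^k)^\perp$. Since $\bigcap_{k} (\mathbb C^k)^\perp = \{0\}$ and $0\in K$, passing to the closures forces the limit to be $D_0$. The case $x=0$ is trivial because $0$ is fixed by every $G_N$, so $\mu^0_N = D_0$ for all $N$.

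I would organize the write-up so that only the $SU(N)$ case needs the nontrivial input from Proposition \ref{sec:unitary}; the $U(N)$ case then follows either directly by the same polar-coordinate argument on $S^{2N+1}$ (this is the classical Lévy concentration on the sphere, already cited via \cite{Le}) or by pushing the $SU$ result forward along the Hopf fibration $U(1)\hookrightarrow S^{2N+1}\to\mathbb{CP}^N$ and noting that the fiber $U(1)$ direction is harmless in the weak limit. One subtlety worth a sentence: the concentration statements of Section \ref{sec:unitary} are phrased for the invariant measure of the group, but what we need here is the pushforward to the reference compact, i.e. precisely the $d\mu_{\mathbb{CP}^n}$ factor isolated in that proof, so the translation is immediate.

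The main obstacle is not any single hard estimate — all the analytic content is borrowed from Theorem 3 and Proposition \ref{sec:unitary} — but rather bookkeeping the weak topology carefully: one must verify that the successive concentration loci $K \cap (\mathbb C^k)^\perp$ are themselves weakly closed (or replace them by their weak closures at each stage), that the nested intersection of these closures really collapses to $\{0\}$ and not to a larger weakly closed set, and that "concentration at each finite stage" genuinely upgrades to "the limit measure is $D_0$" in the weak-* sense on the compact $K$. This is exactly the point handled in the last two sentences of the proof of Theorem 3 ("Taking the closure of these sets and noting that $\bigcap_k \mathcal H_k = 0$"), so I would simply transcribe that argument, checking that nothing about it used reality of the Hilbert space.
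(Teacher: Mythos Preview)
Your proposal is correct and is essentially the same approach as the paper's. The paper does not give a separate proof of this proposition; it simply says one ``repeat[s] substantially the same calculation'' as in the real Hilbert space theorem, replacing spheres by $\mathbb{CP}^N$ (for $SU$) or $S^{2N+1}$ (for $U$) and invoking the concentration result of Section~\ref{sec:unitary}, with the Hopf fibration linking the two cases --- exactly the plan you outline, and in fact with considerably more detail than the paper itself provides. (Minor bookkeeping: what you call ``Theorem~3'' is Theorem~2 in the paper's numbering, and the input you cite as ``Proposition~\ref{sec:unitary}'' is Proposition~1; also note that the paper's treatment of general $x\neq 0$ in the real case is the one-line Proposition immediately preceding the statement in question, which you correctly fold into your argument.)
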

We do not know whether $SO(\infty)$ and $SU(\infty)$ are extremely amenable or not, if endowed with the inductive topology. Nevertheless, we know that $U(\infty)$ is not extremely amenable with such topology. Despite this, probably as a consequence
of the weakness of the weak topology, its action on $K$ results in the concentration of the measure in a point. This concentration is ``non essential'' in the sense that it is due to the property of the compact and not of the Levy sequence of groups.
This is easily seen by changing the compact set. If $K=B_r^\infty$ with the weak topology and we identify $S^1$ endowed with the metric topology with $U(\infty)/SU(\infty)$, then $K'=K\times S^1$ with the product topology is a compact set. Now, after fixing
a point $x'=(x,\theta)\in K'$, we see that, because of Proposition \ref{prop:fixed}, necessarily the measures generated by the sequence $SU(N)'s$ concentrate on the point $x'_0=(0,\theta)$. But since
$U(\infty)$ has no fixed points on $S^1=U(\infty)/SU(\infty)$, then the measures induced by the sequence of $U(N)'s$ will obviously concentrate uniformly on ${0}\times S^1$.

We can now state a simple but interesting proposition, giving us some information when working on a non compact infinite dimensional set.
\begin{prop}\label{prop:noncompact}
 Let $\{K_n\}_{n\in \mathbb N}$ be a sequence of sets $K_n\subset K_{n+1}$, over which acts the family of groups $\{G_n\}$, endowed with an invariant measure. Let $G_\infty$ the limit group defined by the sequence $G_n\subset G_{n+1}$.
 Assume that there exists a concentration locus $c_n\subset K_n$, such that cutting $K_n$ along $c_n$, it is divided into disconnected parts $K^+_n$ and $K^-_n$, having the following properties:
 \begin{itemize}
 \item $c_n\subset K^+_n$ and $K_n=K^+_n\cup K^-_n$;
 \item for $n$ large enough all $K^+_n$ are contained in a compact subset $K^+$ of $K$;
 \item it exists a sequence of subgroups $F_n\subseteq G_n$ acting on $K^+_n$ and having the same limit $G_\infty$.
 \end{itemize}
 Then, if $G_\infty$ is extremely amenable, $K\equiv \bigcup_n K_n$ admits a fixed point.
\end{prop}
\begin{proof}
 Since $K^+$ is compact and $F_n$ act on $K^+$, we get that $G_\infty$ has a fixed point in $K^+$, and therefore in $K$.
\end{proof}
This shows how eventual fixed points could be individuated even on a non compact set $K$. It is clear that to do it, it's essential to have a well posed definition of concentration locus.


\subsection{Linear non symmetric action}
Let $a\in \mathbb R^{N+2}$, and let $\bar B_{N+2}$ be the closed ball
\begin{align}
\bar B_{N+2}=\{x\in \mathbb R^{N+2} | \|x\|\leq 1 \},
\end{align}
while
\begin{align}
B_{N+2}=\{x\in \mathbb R^{N+2} | \|x\|<1 \},
\end{align}
is the set of its internal points. We call $\alpha=\|a\|$ and assume
\begin{align}
 0<\alpha <1.
\end{align}
Finally, we call
\begin{align}
 S_{N+1}=\partial \bar B_{N+2}
\end{align}
that is a $(N+1)$-sphere of radius $1$, centred in $o\equiv 0$. We now define a linear action of $SO(N+2)$ on $S_{N+1}$, which we simply call {\it the non symmetric action}. This is a left action
\begin{align}
 \nu_N: SO(N+2)\times S_{N+1} \longrightarrow S_{N+1}
\end{align}
defined as follows. Let $p\in S_{N+1}$, and consider the right half line $r$ starting from the origin $a$ through the point $p$. If $R\in SO(N+2)$, the right half line $R(r)$ will meet $S_{N+1}$ in a single point $q$. We define
\begin{align}
 \nu_N(R,p):=q.
\end{align}
Since $0<\alpha<1$, it is well defined and gives rise to a transitive action of $SO(N+2)$ on $S_{N+1}$. \\
Let us now determine the invariant measure on the sphere induced by the action $\nu_N$. First notice that, like for the symmetric action, the isotropic subgroup of a point $p\in S_{N+1}$ is the $SO(N+1)$ fixing the segment $op$.
The complementary generators originate the displacements defining the measure. We can easily compare the resulting measure $d\mu_a$ with the standard measure $d\mu_0$.
Let $\phi$ be the angle between the segments $oa$ and $op$ with orientation from $oa$ to $op$. Then
\begin{prop}
With the above notations we have
\begin{align}
d\mu_a=\frac {1+\alpha\cos\phi}{(1+\alpha^2+2\alpha\cos \phi)^{\frac N2+1}} d\mu_0,
\end{align}
where $\mu_0$ is the Lebesgue measure over $S_{N+1}$ normalised so that $\mu_0(S_{N+1})=1$. 

In particular $\mu_a(S_{N+1})=1$.
\end{prop}
\begin{proof}
The normalised measure $d\mu_0$ is nothing but the element of solid angle as seen by $o$, divided by the total solid angle. By construction, $d\mu_a$ is the element of solid angle as seen by $a$, again divided by the total solid angle, so that we have
of course $\mu_a(S_{N+1})=1$. Now fix a point p on $S_{N+1}$. By definition the relation between the two measures in $p$ is
\begin{align}
d\mu_a(p) =\frac {\cos \psi}{R^{N+1}} d\mu_0,
\end{align}
where $R=\|p-a\|$ and $\psi$ is the angle $o\hat pa$. The cosines theorem says us that
\begin{align}
 R^2=1+\alpha^2 +2\alpha \cos \phi.
\end{align}
As for $\cos \psi$, we can again apply the cosines theorem to the triangle $apo$ in the form
\begin{align}
\|oa\|^2= \|op\|^2+\|pa\|^2 -2\|pa\| \|op\| \cos \psi.
\end{align}
Using that $\|oa\|=\alpha$, $\|op\|=1$, and $\|pa\|=R$, we get
\begin{align}
 \cos \psi= \frac {1+\alpha\cos\phi}{(1+\alpha^2+2\alpha\cos \phi)^{\frac 12}},
\end{align}
and the proposition is proved.
\end{proof}


\subsection{The extremal infinite sphere and the concentration of measure}\label{5.4}
Let $\mathcal H$ be a separable real Hilbert space, and $\{e_n\}_{n=0}^\infty$ be a complete orthonormal system (CONS) for it. Let $a\in \mathcal H$, with $\alpha:=\|a\|=1$, we define the {\it extremal sphere}
\begin{align}
 S_a:=\{x\in \mathcal H | \|x\|=1 \}.
\end{align}
Here the $a$ means only that the sphere has the point $a$ as a marked point.
We assume to choose the CONS in such the way that
\begin{align}
 a_n=(e_n|a) \neq 0 \qquad \forall n.
\end{align}
For any given $N=0,1,2,\ldots$, we identify
\begin{align}
\mathbb R^{N+2}&\equiv \mathcal H_N=\{ x\in \mathcal H | (e_n| x)=0,\ \forall n>N+1 \}, \\
S_{\underline a_N} &=\mathcal H_N\cap S_a, \quad \underline a_N=(a_0, \ldots, a_{N+1}).
\end{align}
By hypothesis, $0<\|\underline a_N\|\equiv \alpha_N<1$, so that $\underline a_N$ is internal to $S_{{\underline a}_N}$. Let $O(\mathcal H)$ be the unitary group of $H$. The subgroup fixing $\mathcal H_N^\perp$ is $O(N+2)$, the orthogonal group acting
on $\mathbb R^{N+2}$. $SO(N+2)$ is the subgroup fixing the orientation of $(e_0,\ldots, e_{N+1})$. Let us consider the sequence
\begin{align}
 SO(\infty):=(SO(2)\subset SO(3) \subset \ldots\subset SO(N) \subset SO(N+1) \subset \ldots)
\end{align}
defined by the identifications above. We see that on each $S_{\underline a_N}$ we can define a left action $\nu_{\underline a_N}$ of $G_N:=SO(N+2)$ on $S_{\underline a_N}$, defined as in previous section. Notice that the action of $G_N$ over
$S_{\underline a_N}$ is just the restriction to
$S_{\underline a_N}$ of the action on $S_{\underline a_{N+1}}$ of the subgroup of $G_{N+1}$ leaving fixed the segment $a_{N+1}a_N$.\\
The measure induced on $S_{\underline a_N}$ by $G_N$ is
\begin{align}
d\mu_{\underline a_N}(p)=\frac {1+\alpha_N\cos\phi_N}{(1+\alpha_N^2+2\alpha_N\cos \phi_N)^{\frac N2+1}} d\mu_{N,0}(p),
\end{align}
where $\phi_N$ is the azimuthal angle of $p$ w.r.t. $ao$, and $d\mu_{N,0}(p)$ is the Lebesgue measure on $S_{a_N}$ normalised to 1. We get:
\begin{prop}
With the above notation we have
\begin{align}
\lim_{N\to\infty} \mu_{\underline a_N}=D_a,
\end{align}
where $D_a$ is the Dirac measure with support in $a$. The action of $SO(\infty)$ on $S_a$ has fixed point in $a$.
\end{prop}
\begin{proof}
Using standard polar coordinates on the sphere, w.r.t. the azimuthal angle $\phi_N$, we can write
\begin{align}
d\mu_{\underline a_N}(p)=\frac {1+\alpha_N\cos\phi_N}{(1+\alpha_N^2+2\alpha_N\cos \phi_N)^{\frac N2+1}} \sin^{N} \phi_N d\phi_N d\mu_{N-1,0}(p).
\end{align}
On the other hand, if $\theta=a\hat p o$ and $R(\phi_N)=\|ap\|=\sqrt {1+\alpha^2+2\alpha \cos \phi_N}$, we have
\begin{align}
 \sin \phi_N=R(\phi_N) \sin \theta.
\end{align}
From this we get
\begin{align}
 \frac {1+\alpha_N\cos\phi_N}{(1+\alpha_N^2+2\alpha_N\cos \phi_N)^{\frac N2+1}} \sin^{N} \phi_N d\phi_N=\sin^N \theta \ d\theta.
\end{align}
Therefore, in this coordinates
\begin{align}
d\mu_{\underline a_N}(p)= \sin^N \theta \ d\theta\ d\mu_{N-1,0}(p).
\end{align}
This is the standard spherical measure, which is well known to concentrate on the equatorial sphere $\theta=\frac \pi2$. On $S_{N+1}$ this is a $S^N$ sphere with center in $\underline a_N$ and radius $\sqrt {1-\|\underline a_N\|^2}$.
Since $\underline a_N \to a$, and $\|a\|=1$, we get the assert.
\end{proof}

\begin{rem} \rm
$S_a$ is not a compact space in the topology of $\mathcal H$.
Notice that this result is nothing but a consequence of Proposition \ref{prop:noncompact}: here the equators play the role of $c_n$ of the proposition. Again, the equators are just the image of the concentration locus of $SU(N+1)$ mapped on the sphere
now through the non symmetric action. Under the action the locus is shrunk down to $\underline a$.
\end{rem}


\subsection{Pictorial idea}\label{pictorialx}

We want now to give a pictorial vision to the construction done above. This is because it will help us to better understand what happens and to proceed further to the compact case. The idea of the asymmetric action of rotation is depicted in figures
\ref{caso D1-1} and \ref{caso D1-2}. They show the relation between the rotation angle $\theta$ and the azimuthal angle $\phi$ of spherical coordinates. When the segment $ap$ belongs to the plane of the rotation (assuming a plane rotation), the main relation
is $\sin \phi=\|ap\|\sin \theta$.

\begin{figure}[!htbp]
\begin{center}
\begin{tikzpicture}[rounded corners]
\draw [red,dashed] (-1,-2.5) -- (1.48556,1.48556*2.5);
\draw [thick] (0.5,1.25) arc (68.2:0:1.346cm);
\draw [thick] (1.5,0) arc (0:-7.18:1.5cm);
\draw [thick] (-0.5,-1.25) arc (68.2:26.565:1.346cm);
\draw [thick] (1.34164-1,0.670819-2.5) arc (26.565:21.926:1.5cm);
\draw (-1,-2.5) -- (4,0);
\draw (0,0) -- (4,0);
\draw (-1,-2.5) -- (3.96863,-0.5);
\draw (0,0) -- (3.96863,-0.5);
\draw [ultra thick] (0,0) circle (4cm);
\draw [fill] (0,0) circle (1.5pt);
\draw [red,fill] (-1,-2.5) circle (1.5pt);
\draw [cyan,fill] (4,0) circle (1.5pt);
\draw [cyan,fill] (3.96863,-0.5) circle (1.5pt);
\node at (-0.2,0) {$\pmb o$};
\node [red] at (-1.2,-2.5) {$\pmb a$};
\node [cyan] at (4.2,0) {$\pmb p$};
\node [cyan] at (4.6,-0.5) {$\pmb {R_{d\theta}(p)}$};
\node at (1.3,0.9) {$\pmb \phi$};
\node at (1.5,-0.4) {$\pmb {d\phi}$};
\node at (0,-1.3) {$\pmb \theta$};
\node at (0.5,-2.2) {$\pmb {d\theta}$};
\end{tikzpicture}
\caption{Relation between the rotation angle $d\theta$ and the polar angle $d\phi$ for the one dimensional sphere. $a$ and $o$ stay on the same plane.}\label{caso D1-1}
\end{center}
\end{figure}

When the rotation is on a plane orthogonal to $ao$, then the relation is just $\|op\| d\psi=\|ap\| d\chi$. Notice that in this case the length of $op$ is preserved.

\begin{figure}[!htbp]
\begin{center}
\begin{tikzpicture}[rounded corners]
\draw [red,dashed] (0,-3.5) -- (0,0);
\draw [cyan] (0,-3.5) -- (3.6,0.45);
\draw [cyan] (0,-1.5) -- (3.6,0.45);
\draw [cyan] (0,-3.5) -- (3.6,-0.45);
\draw [cyan] (0,-1.5) -- (3.6,-0.45);
\draw [ultra thick] (0,0) circle [x radius=4cm, y radius=1cm];
\draw [fill] (0,-1.5) circle (1.5pt);
\draw [red,fill] (0,-3.5) circle (1.5pt);
\draw [cyan,fill] (3.6,0.45) circle (1.5pt);
\draw [cyan,fill] (3.6,-0.45) circle (1.5pt);
\node at (-0.2,-1.5) {$\pmb o$};
\node [red] at (-0.2,-3.5) {$\pmb a$};
\node [cyan] at (3.9,0.5) {$\pmb p$};
\node [cyan] at (4.3,-0.5) {$\pmb {R_{d\chi}(p)}$};
\node at (1.5,-0.4) {$\pmb {d\psi}$};
\node at (0.7,-2.4) {$\pmb {d\chi}$};
\end{tikzpicture}
\caption{Relation between the rotation angle $d\chi$ and the polar angle $d\psi$ when the circle is perpendicular to $oa$: $op\ d\psi=ap\ d\chi$.}\label{caso D1-2}
\end{center}
\end{figure}

Using this relations and decomposing arbitrary rotations in terms of rotation on planes containing $ao$ and rotations in hyperplanes orthogonal to $ao$, one easily computes the measure induced by the action $\nu_a$ on a sphere.
This is shown in figure \ref{rotazioni S2}.

\begin{figure}[!htbp]
\begin{center}
\begin{tikzpicture}[rounded corners]
\draw[ultra thick,black!30!white] (0,0) circle [x radius=4cm, y radius=1cm];
\draw[ultra thick,black!30!white] (0,0) circle [x radius=1cm, y radius=4cm];
\draw[ultra thick,black!30!white] (0,-2) circle [x radius=3.4cm, y radius=0.8cm];
\draw [dashed,red] (0,4) -- (0,-4);
\draw [thick, blue!40!white] (0,0) -- (0.836,2.2);
\draw [thick, blue!40!white] (0,0) -- (0.915,1.6);
\draw [thick, blue] (0,3) -- (0.836,2.2);
\draw [thick, blue] (0,3) -- (0.915,1.6);
\draw [thick, orange!40!white] (0,0) -- (-3.5,-0.485);
\draw [thick, orange!40!white] (0,0) -- (-3,-0.66);
\draw [thick, orange] (0,3) -- (-3.5,-0.485);
\draw [thick, orange] (0,3) -- (-3,-0.66);
\draw [thick, green!40!white] (0,0) -- (-0.4,-2.79);
\draw [thick, green!40!white] (0,0) -- (-1.1,-2.755);
\draw [thick, green] (0,3) -- (-0.4,-2.79);
\draw [thick, green] (0,3) -- (-1.1,-2.755);
\shade[ball color=red] (0,3) circle (1.5pt);
\shade[ball color=black!50!white] (0,0) circle (1.5pt);
\node at (0.2,0) {$\pmb o$};
\node [red] at (0,3.2) {$\pmb a$};
\shade[ball color=blue,opacity=0.5] (0,0) circle (4cm);
\draw [thick] (-4,0) arc (180:360:4cm and 1cm);
\draw [thick] (0,4) arc (90:-90:1cm and 4cm);
\draw [thick] (-3.37,-2.1) arc (187.2:352.8:3.4cm and 0.8cm);
\shade[ball color=blue] (0.836,2.2) circle (1.5pt);
\shade[ball color=blue] (0.915,1.6) circle (1.5pt);
\shade[ball color=orange] (-3.5,-0.485) circle (1.5pt);
\shade[ball color=orange] (-3,-0.66) circle (1.5pt);
\shade[ball color=green] (-0.4,-2.79) circle (1.5pt);
\shade[ball color=green] (-1.1,-2.755) circle (1.5pt);
\node [blue] at (1.1,2.2) {$\pmb p$};
\node [blue] at (1.5,1.6) {$\pmb {R_1(p)}$};
\node [orange] at (-3.6,-0.7) {$\pmb q$};
\node [orange] at (-2.9,-0.99) {$\pmb {R_2(q)}$};
\node [green] at (-0.4,-3.1) {$\pmb r$};
\node [green] at (-1.1,-3) {$\pmb {R_3(r)}$};
\end{tikzpicture}
\caption{Rotations decompose on the ones along planes containing $oa$ (like $R_1$) and the ones on hyperplanes through $o$ perpendicular to $oa$ (like $R_2$ and $R_3$).}\label{rotazioni S2}
\end{center}
\end{figure}

One can then inductively generate a sequence of actions $\nu_{\underline a_N}$ on a sequence of spheres $S_{\underline a_{N+1}}$, $S_{\underline a_{N}}\subset S_{\underline a_{N+1}} \subset S_{\underline a_{N+2}}$, as done in the previous section
and depicted in figure \ref{fig:caso generale}.

\begin{figure}[!htbp]
\begin{center}
\begin{tikzpicture}[rounded corners]
\draw [fill,black!50!white] (0,-3.9) circle [x radius=0.8pt, y radius=0.5pt];
\draw[ultra thick,black!30!white] (0,0) circle [x radius=4cm, y radius=1cm];
\draw[dashed] (0,-3.9) -- (0,3.9);
\draw[dashed] (0,-2) circle [x radius=3.4cm, y radius=0.8cm];
\draw [black!70!white, -latex, thick] (0,0) -- (0.75,0.2);
\node [black!70!white, -latex, thick] at (0.4, 0.3) {$\pmb {e_1}$};
\draw [black!70!white, -latex, thick] (0,0) -- (0.65,-0.3);
\node [black!70!white, -latex, thick] at (0.32,-0.3) {$\pmb {e_0}$};
\draw [black!70!white, -latex, thick] (0,0) -- (0,0.75);
\node [black!70!white, -latex, thick] at (-0.25,0.6) {$\pmb {e_2}$};
\draw[red, dashed] (-1.5,-0.4) -- (-1.5,-2.4);
\draw[red, dashed] (-1.5,-0.4) -- (0,0);
\draw[red, dashed] (0,0) -- (-1.5,-2.4);
\draw [thick] (-0.75,-0.2) arc (40:-0.8:0.4cm);
\draw [thick] (-1,-1.6) arc (100:6:0.2cm);
\draw[green!70!black,thick] (-1.5,-2.4) -- (3.2,1.5);
\draw[blue,thick] (-1.5,-0.4) -- (3,-0.665);
\shade[ball color=red] (-1.5,-0.4) circle (1.5pt);
\shade[ball color=red] (-1.5,-2.4) circle (1.5pt);
\shade[ball color=black!70!white] (0,0) circle (1.5pt);
\node[red] at (-1.8,-0.4) {$\pmb {\underline a_0}$};
\node[red] at (-1.8,-2.4) {$\pmb {\underline a_1}$};
\node at (-0.2,0.1) {$\pmb o$};
\node at (-0.7,0) {$\pmb {\phi_0}$};
\node at (-0.7,-2) {$\pmb {\phi_1}$};
\shade[ball color=blue,opacity=0.5] (0,0) circle (4cm);
\draw [fill] (0,3.9) circle [x radius=0.8pt, y radius=0.5pt];
\draw [ultra thick] (-4,0) arc (180:360:4cm and 1cm);
\draw [fill,blue] (3,-0.665) circle [x radius=0.8pt, y radius=1.3pt];
\draw [fill,green!30!black] (3.2,1.5) circle [x radius=1pt, y radius=1.3pt];
\node[green!30!black] at (3.8,1.7) {$\pmb {p\in S_2}$};
\node[blue] at (3.6,-0.8) {$\pmb {p\in S_1}$};
\end{tikzpicture}
\caption{Asymmetric actions in successive dimensions. The picture represents the 1 and 2 dimensional cases, but one can imagine to consider the general case after replacing $0,1,2$ with $N, N+1, N+2$ everywhere.
The points $\underline a_0\equiv (a_0,a_1)$ and $a_1\equiv (a_0,a_1,a_2)$ are the centres of the rotations action on $S_1$ and $S_2$ respectively.}\label{fig:caso generale}
\end{center}
\end{figure}

The invariant measure induced by this action is nothing but the solid angle on which the point $\underline a_N$ ``sees'' the portions of $S_{N+1}$. The hyperplane through $\underline a_N$ perpendicular to $ao$ intersects $S_{N+1}$ in a sphere $S_N$,
which, by construction, separates $S_{N+1}$ in two portions of equal measure. Therefore, we call it equator. If one excludes to each sphere a small tubular neighbourhood of the equator, with fixed radius $\epsilon$, then the resulting sets have measures
that tends to zero when $N$ diverges. Indeed, this is exactly the same thing which happens to the usual construction for spheres of radius 1 and increasing dimension. The phenomenon is obviously exactly the same, so we can state that the concentration
collapses on the equators. In figure \ref{Misure} a particular case is shown, with two portions that have equal measure with respect to the asymmetric action.

\begin{figure}[!htbp]
\begin{center}
\begin{tikzpicture}[rounded corners]
\draw [fill,black!50!white] (0,-3.9) circle [x radius=0.8pt, y radius=0.5pt];
\draw[ultra thick,black!30!white] (0,-2) circle [x radius=3.4cm, y radius=0.8cm];
\draw [fill,violet] (-4,0) arc (180:360:4cm and 1cm) (4,0) arc (0:180:4cm and 4cm);
\draw [fill, violet] (2.075,-3.41) arc (-58.6793:58.6793-180:4cm and 4cm);
\draw [fill, violet!50!white] (0,-3.2) circle [x radius=2.24 cm, y radius=0.56cm];
\draw[dashed] (0,-3.76) -- (0,-1);
\shade[ball color=red] (0,-2) circle (1.5pt);
\node [red] at (-0.2,-2) {$\pmb a$};
\shade[ball color=blue,opacity=0.5] (0,0) circle (4cm);
\draw [fill] (0,3.9) circle [x radius=0.8pt, y radius=0.5pt];
\draw [thick] (-3.37,-2.1) arc (187.2:352.8:3.4cm and 0.8cm);
\fill [decorate,decoration={text along path,
text=asymmetric equator}]
[fill=blue!20,draw=blue,thick] (-1.5,-2.6) arc (-117:-63:3.4cm and 0.8cm);
\end{tikzpicture}
\caption{The two dark spherical cups have the same volume since are seen by $a$ with the same solid angle. The asymmetric equator separates the sphere in two part having equal volume. The measure tends to concentrate on the asymmetric
equator for the same reason and the same sense the Lebesgue measure concentrates on the equator.}\label{Misure}
\end{center}
\end{figure}

There is, however, an important difference between the asymmetric case and the usual one. The point is that, while the equators remain perfectly symmetric from the point of view of the measures, in the asymmetric case it is not so from the
geometrical perspective. On $S_{N+1}$ we can individuate the south pole as the intersection of the sphere with the half right line from $o$ passing through $\underline a_N$. We see that $\underline a_N$ is the closest to the south pole, with a radius smaller than
that of the equator. \\
Since our sequence of points $\underline a_N$ by construction converges to $a$, with $\|a\|=1$, so that $a\in S_a$, we see that the radii of the equators tend to zero and collapse to $a$. This is shown in figure \ref{concentration}. This difference
is quite important in our opinion for the following reason.\\
In the symmetric case, as we said, the Lebesgue measure concentrates on equators. Which equator depend on choices. Any equator we choose is good, after fixing polar coordinates with azimuth orthogonal to that equatore, everything goes as predicted.
The point is that, however, the concentration does not localize, in the sense that it does not concentrate on a specific point of a limit equator. The same phenomenon happens in the asymmetric case if we consider different equators, see figure
\ref{sconcentration}.

\begin{figure}[!htbp]
\begin{center}
\begin{tikzpicture}[rounded corners]
\draw [fill,black!50!white] (0,-3.9) circle [x radius=0.8pt, y radius=0.5pt];
\draw[ultra thick,black!30!white] (0,-3.8) circle [x radius=0.5cm, y radius=0.18cm];
\draw[red, dashed] (0,-3.8) -- (-1.52,-0.76);
\draw[ultra thick,black!30!white,rotate=26.5651] (0,0) circle [x radius=4cm, y radius=1cm];
\node (a) at (0.1,-3.75) {};
\node [green!80!black] (b) at (5.5,-1.5) {\bf{Equator of} $\pmb {S_{N+1}}$};
\draw[green!80!black,thick] [-latex, bend right] (b) to (a);
\node (f) at (-2.43,-0.305) {};
\node (e) at (-5,0.2) {};
\draw[red,thick] [-latex, bend left] (e) to (f);
\draw[dashed] (0,-3.9) -- (0,3.9);
\draw[dashed] (-8/2.23607,-4/2.23607) -- (8/2.23607,4/2.23607);
\draw [red,fill] (-2.43,-0.305) circle (1pt);
\draw [red,dashed] (-0.44,-1.30) -- (-2.43,-0.305);
\shade[ball color=black!70!white] (0,0) circle (1.5pt);
\shade[ball color=red] (0,-3.8) circle (1.5pt);
\shade[ball color=red] (-1.52,-0.76) circle (1.5pt);
\node at (-0.2,0.1) {$\pmb {o}$};
\node [red] at (-0.22,-3.8) {$\pmb {\underline a_N}$};
\node [red] at (-1.57,-1) {$\pmb {\underline a_{N-1}}$};
\shade[ball color=blue,opacity=0.5] (0,0) circle (4cm);
\draw [fill] (0,3.9) circle [x radius=0.8pt, y radius=0.5pt];
\draw [ultra thick,,rotate=26.5651] (-4,0) arc (180:360:4cm and 1cm);
\draw [red,fill] (-0.44,-1.30) circle (1pt);
\node at (2.5,0) {$\pmb {S_N}$};
\node at (3.5,-2.4) {$\pmb {S_{N+1}}$};
\node (c) at (-0.44,-1.30) {};
\node [red] (d) at (-5,0) {\bf{Equator of} $\pmb {S_N}$};
\draw[red,thick] [-latex, bend right] (d) to (c);
\end{tikzpicture}
\caption{Nearest the point $a$ is to the sphere, smallest is the equator. Since the sequence $\underline a_N$ converges to a point $a$ on $S_a$, the equators of concentration collapse to $a$.}\label{concentration}
\end{center}
\end{figure}

\begin{figure}[!htbp]
\begin{center}
\begin{tikzpicture}[rounded corners]
\draw[ultra thick,black!30!white] (0,-3.8) circle [x radius=0.5cm, y radius=0.18cm];
\draw[ultra thick,black!30!white,rotate=87] (0,0) circle [x radius=3.98cm, y radius=1cm];
\node (a) at (0.3,-3.75) {};
\node [green!80!black] (b) at (6.5,-1.5) {\bf{asymmetric equator}};
\draw[green!80!black,thick] [-latex, bend right] (b) to (a);
\node [blue] (f) at (-0.1,-3.85) {};
\node [blue] (e) at (4,-3.8) {\bf{south pole}};
\draw[blue,thick] [-latex, bend left] (e) to (f);
\draw[dashed] (0,-3.9) -- (0,3.9);
\shade[ball color=black!70!white] (0,0) circle (1.5pt);
\shade[ball color=red] (0,-3.8) circle (1.5pt);
\node at (-0.2,0.1) {$\pmb {o}$};
\node [red] at (-0.22,-3.8) {$\pmb {\underline a_N}$};
\draw [fill,black] (0,-3.9) circle [x radius=0.8pt, y radius=0.5pt];
\shade[ball color=blue,opacity=0.5] (0,0) circle (4cm);
\draw [fill] (0,3.9) circle [x radius=0.8pt, y radius=0.5pt];
\draw [ultra thick,rotate=-93] (-3.98,0) arc (180:360:3.98cm and 1cm);
\node (c) at (-0.95,-1.30) {};
\node [red] (d) at (-5,0) {\bf{meridian equator}};
\draw[red,thick] [-latex, bend right] (d) to (c);
\end{tikzpicture}
\caption{Foliation of the infinite dimensional ball.}\label{sconcentration}
\end{center}
\end{figure}

\newpage

If in place of asymmetric equators we choose meridian equators, as shown in figure \ref{sconcentration}, we will not see a collapse on a point, but a delocalised concentration on the meridians. This is due to the fact these equators do not collapse into a
single point. The concentration on a defined point is due to a ``compactification'' of the sequence of equators as given by Proposition \ref{prop:noncompact}: our deformation is such that the equators relative to the south poles tends to
enter in a compact region around the south pole
$a$. They are flattened to the south pole. In the other direction this does not happen, being as the infinite dimensional sphere is not compact. This (probably) shows the role of compactness in determining the fixed points of the action.
In any case the equators where the concentrations happens are in all cases the images of the concentration locus in the sequence of groups.\\
We will now proceed by further investigating the compact case more deeply.


\subsection{Non symmetric action on a compact space}
Up to now we worked with noncompact spaces. On them we can of course have actions without fixed points. For example, it is sufficient to take $\|a\|<1$ to get an action of $SO(\infty)$ on $S_a$, which has not fixed points. Let us keep in mind
such an action. We want now to extend it to an action on a compact space. To this end, let us first realise the sphere in a well specified space.

Let us consider the real space
\begin{align}
 V:=H^1_0(0,2\pi)=\{f\in W^{1,2}([0,2\pi]) | f(0)=f(2\pi)=0, f \mbox{ absolutely continuous} \}.
\end{align}
The functions
\begin{align}
 u_n(x)=\frac {\sin (nx)}{\sqrt {\pi (n^2+1)}}, \quad n=1,2,3,\ldots
\end{align}
form an orthonormal complete set (CONS) for $V$. The unit sphere $S$ in $V$ can then be described, w.r.t. the given (CONS), as
\begin{align}
 S=\{\sum_{n=1}^\infty x_n u_n | \sum_{n=1}^\infty x_n^2 =1 \}.
\end{align}
Now, consider the embedding $J: H^1_0(0,2\pi) \hookrightarrow L^2([0,2\pi])$. $J$ is a compact operator, which means that the image of a closed bounded set has compact closure. Let us now consider the set $J(S)$.
In $L^2([0,2\pi])$, we consider the real Hilbert subspace $H$ generated by the orthonormal system
\begin{align}
 v_n=\frac {\sin nx}{\sqrt \pi}, \quad n=1,2,\ldots.
\end{align}
Then $J:V\rightarrow H$ is again compact and if $y=J(x)$, we see that its components w.r.t. $\{v_n\}$ are
\begin{align}
 y_n=\frac {x_n}{\sqrt {n^2+1}}.
\end{align}
Therefore, the image $J(S)$ of the unit sphere is
\begin{align}
 J(S)=\{ \sum_{n=1}^\infty y_n v_n| \sum_{n=1}^\infty (n^2+1)y_n^2 =1 \}.
\end{align}
Thus, $J(S)$ looks as an infinite dimensional ellipsoid whose principal form have lengths
\begin{align}
 L_n=\frac 1{\sqrt {n^2+1}}.
\end{align}
In particular, it is squashed more and more in the increasing directions, since $L_n\to0$. This implies that all the points satisfying $\sum_{n=1}^\infty (n^2+1)y_n^2 <1$ are accumulation points for $J(S)$, and we conclude that if
\begin{align}
 B:=\{ x\in V| \|x\|_V \leq 1\},
\end{align}
so that $S=\partial B$, then the compact set generated by $J(S)$ is
\begin{align}
 K_S:= \overline{J(S)}=J(B).
\end{align}
So, in order to get an action of $SO(N)$ on $K_S$ we have to extend the action on $S$ to a continuous action on the whole $B$. To this end, we can foliate $B$ in spheres:
\begin{align}
 B=\bigcup_{\xi\in[0,1]} S_\xi, \quad\ S_\xi=\{x\in V| \|x-a(1-\xi)\|=\xi \}.
\end{align}
If $B_\xi$ is the ball having $S_\xi$ as a boundary, then notice that if $\xi_1<\xi_2$ then $S_{\xi_1}\subset B_{\xi_2}-S_{\xi_2}$. Indeed, for $x\in S_{\xi_1}$ we have
\begin{align}
 \|x-a(1-\xi_2)\|\leq  \|x-a(1-\xi_1)\|+(\xi_2-\xi_1)\|a\|=\xi_1 +(\xi_2-\xi_1)\|a\|=\xi_2 -(\xi_2-\xi_1)(1-\|a\|)<\xi_2,
\end{align}
since we assumes $\|a\|<1$. Therefore, $x\in B_{\xi_2}-S_{\xi_2}$. This allows us to extend the action of the rotations to the whole $B$ as shown in figure \ref{foliation}.

\begin{figure}[!htbp]
\begin{center}
\begin{tikzpicture}[rounded corners]
\draw [blue] (0,-3) -- (0.9,3.9);
\draw [blue] (0,-3) -- (-4,0);
\shade[ball color=black!70!white] (0,0) circle (1.5pt);
\shade[ball color=red] (0,-3) circle (1.5pt);
\node at (-0.2,0.1) {$\pmb {o}$};
\node [red] at (0,-2.8) {$\pmb {a}$};
\draw[ultra thick] (0,-21/8) circle (0.5cm);
\draw[ultra thick] (0,-2) circle (1.3333333cm);
\draw[ultra thick] (0,-1.5) circle (2cm);
\draw[ultra thick] (0,-0.75) circle (3cm);
\draw[ultra thick] (0,0) circle (4cm);
\node at (0.5,-2) {$\pmb {S_{\frac 18}}$};
\node at (1.1,-0.8) {$\pmb {S_{\frac 13}}$};
\node at (1.6,0.2) {$\pmb {S_{\frac 12}}$};
\node at (2.3,1.6) {$\pmb {S_{\frac 34}}$};
\node at (3,3) {$\pmb {S}$};
\shade[ball color=blue] (0.9,3.9) circle (1.5pt);
\shade[ball color=blue] (-4,0) circle (1.5pt);
\shade[ball color=blue] (0.675,2.173) circle (1.5pt);
\shade[ball color=blue] (0.45,0.45) circle (1.5pt);
\shade[ball color=blue] (0.3,-0.7) circle (1.5pt);
\shade[ball color=blue] (-3,-0.75) circle (1.5pt);
\shade[ball color=blue] (-2,-1.5) circle (1.5pt);
\shade[ball color=blue] (-1.333,-2) circle (1.5pt);
\shade[ball color=blue] (0.11,-2.131) circle (1.5pt);
\shade[ball color=blue] (-0.5,-21/8) circle (1.5pt);
\draw [-latex, thick, red] (-1.12,-2.16) arc (180-36.8699:82.5686:1.4cm);
\node [red] at (-0.6,-1.5) {$\pmb {\theta}$};
\node [blue] at (-4.2,0) {$\pmb {p}$};
\node [blue] at (-3.2,-0.75) {$\pmb {q}$};
\node [blue] at (-2.2,-1.5) {$\pmb {r}$};
\node [blue] at (-1.533,-2) {$\pmb {s}$};
\node [blue] at (-0.7,-21/8) {$\pmb {t}$};
\node [blue] at (0.9,4.2) {$\pmb {R_\theta(p)}$};
\node [blue] at (0.68,2.4) {$\pmb {R_\theta(q)}$};
\node [blue] at (0.45,0.7) {$\pmb {R_\theta(r)}$};
\node [blue] at (0.4,-0.45) {$\pmb {R_\theta(s)}$};
\node [blue] at (-0.1,-1.9) {$\pmb {R_\theta(t)}$};
\end{tikzpicture}
\caption{Asymmetric action of rotations on $B$ through the foliation.}\label{foliation}
\end{center}
\end{figure}

At this point we are ready to discuss how a measure on $K_S$ is induced by an ``observer''. Let us  fix any point of $K_S$, say $s$ in figure \ref{foliation} for example. This is our ``observer''. If we consider the orbit of $SO(\infty)$ through $s$, we get a
measure on $K_S$ as defined above. In particular, we define the the family of measures $\mu_N^{s}$, defined as the normalized angular measure under the action of $SO(N+1)$. It is defined as follows. Suppose $M$ is the lower integer such that
$(u_M|s-a)\neq 0$. Then, for $N<M$, $s$ is left fixed by the action of $SO(N+1)$, so that $\mu_N^{s}=D_s$ is the Dirac measure with support $J(s)$. If $N\geq M$, let $\mathcal O_N^s$ the orbit of $SO(N+1)$ through $s$. Then, for
any measurable subset of $K_S$ (a subset $E\subset K_S$ such that $E\cap \mathbb R^n$ is Lebesgue measurable for any $n$), we set
\begin{align}
 \mu_N^{s} (E):= \Omega_N (\mathcal O^s_N \cap E),
\end{align}
where $\Omega_N(R)$ is the solid angle in $\mathbb R^{N+1}$ under which $J_{a}$ sees $R\in \mathbb R^{N+1}$. Here we mean
\begin{align}
 \mathbb R^{N+1}=J(a)+\mathbb R v_1+\ldots+\mathbb R v_{N+1}\subset L^2([0,2\pi]).
\end{align}
Also, we mean the angular measure to be normalized so that $\Omega_N (\mathcal O^s_N)=1$. We now show by hand that the measures so defined on the compact concentrate on $a$.
\begin{prop}
 Fix arbitrarily $\varepsilon>0$ and consider $U_\varepsilon(a)$ the open ball neighbourhood of radius $\varepsilon$ of $J(a)$ in $L^2([0,2\pi])$. If $E$ is a measurable set such that $E\subseteq K_S\cap U_\varepsilon(a)^c$, then
\begin{align}
 \lim_{N\to\infty} \mu_N^s(E)=0.
\end{align}
\end{prop}
\begin{proof}
 It is sufficient to prove that $ \lim_{N\to\infty} \mu_N^s(U_\varepsilon(a)^c)=0.$ Since $K_S$ is an ellipsoid whose half diameter in the $(N+1)$-th direction is $L_{N+1}=((N+1)^2+1)^{\frac 12}$, we see that for $N$ large enough we have
 $L_{N+1}<\varepsilon$. If $\theta_{N+1}$ is the angle in $\mathbb R^{N+1}$ w.r.t. the axis fixed by $v_{N+1}$, we get that $U_\varepsilon(a)^c$ is seen by $J(a)$ under an angle range $[\pi_2-\bar \theta, \pi_2+\bar \theta]$, with
\begin{align}
 \sin \theta \leq \frac {L_{N+1}}{\varepsilon}.
\end{align}
Thus, it looks is seen to be around an equator by an angle $\bar \theta$ that goes to zero as
\begin{align}
 \frac {L_{N+1}}{\varepsilon} \sim \frac 1{\varepsilon N}
\end{align}
as $N$ diverges. Since it decreases faster than $N^{-\frac 12}$, the thesis follows immediately from Levy's argument.
\end{proof}
\begin{rem}\rm
It follows that the measures concentrate uniformly on the fixed point $J(a)$, contrarily to the non compact case. Once again this is a consequence of the fact that as $N$ grows, the concentration locus of $SO(N)$ is mapped into a smaller neighbourhood
of the point $a$.
\end{rem}

\

  From our attempts it seems that even more interesting examples could arise from non linear actions of Lie groups.
Unfortunately, analytical non linear Lie group actions are very difficult to determine. The most notable examples can be found out in algebraic topology whenever the orbit space is investigated.
Regrettably, in these cases only the nature of the action is determined rather than their actual analytic form.
Indeed, they set algebraic criteria to detect whether an action is equivalent to a linear one or not (see for example \cite{Br,Br1},\cite{HsHs}). This does not serve our scopes.
However our results in this area are still not satisfactory enough, so we have decided to postpone this topical discussion to future research.


\section{Applications in Physics}
Finally, we apply our approach to some examples from physics.
The phenomenon of concentration of measure is of course relevant in describing statistical ensembles.
The measure on the phase space determines the equilibrium configuration of the statistical system.
The eventual concentration of measure is therefore characterizing for the system.
As an application of what we have seen up to now, let us consider a statistical ensemble of free particles on $SU(N)$ manifolds.
The phase space is given by the cotangent bundle

\begin{align}
 T^*SU(N)\simeq SU(N)\times \mathfrak {su}(N)^*.
\end{align}

Since $SU(N)$ is compact, it is endowed with a negative definite Killing product that naturally induces a negative definite scalar product over $\mathfrak {su}(N)^*$:
\begin{align}
 \langle,\rangle: \mathfrak {su}(N)^*\times \mathfrak {su}(N)^*\longrightarrow \mathbb R.
\end{align}
It defines the kinetic energy of the particles of mass $m$ by
\begin{align}
E(\vec p)=-\frac {\langle \vec p, \vec p\rangle}{2m}, \qquad\ \vec p\in \mathfrak {su}(N)^*.
\end{align}
If $\kappa$ is the Boltzmann constant and $dm_N$ the Lebesgue measure over $\mathfrak {su}(N)^*$, the Gibbs measure at temperature $T$ is therefore
\begin{align}
d\mu(\xi, \vec p)=\frac 1{\mathcal Z_N} e^{-\frac {\langle \vec p, \vec p\rangle}{2m\kappa T}} \ d\mu_{SU(N)}(\xi)\ dm_N (\vec p), \qquad (\xi,\vec p)\in T^*SU(N),
\end{align}
where $\mathcal Z$ is the normalization factor. We therefore get an easy result for the concentration of the measure here, since the Gaussian measure concentration is discussed in \cite{Le}, page 2. This distribution is uniform, which means that the concentration will
not appear to be on an ``a priori defined locus'', but that the locus depends on the observer. This is the same as expected in a homogeneous distribution on a uniform sphere of large dimension: any given observer will conclude that the largest probability
is in finding a particle near the equator with respect to the observer placed in the north pole. \\
In the example we have just considered the measure concentrates without fixed points. It is interesting to consider the same situation in an example where the concentration determines a fixed point. An example is the case of the spheres when the
measure is determined by the asymmetric action, as presented in Section \ref{Examples}. Again, we can consider a free gas of particles of mass $m$ on $S_{N+1}$, so that the Gibbs measure over $T^*S_{N+1}$ is
\begin{align}
 d\mu_{Gibbs} (x,p)=d\mu_{\underline a_N} (x) dm_{N+1}(p) e^{-\frac {p^2}{2m \kappa T}},
\end{align}
where $m_{N+1}$ is the Lebesgue measure on $\mathbb R^{N+1}$. The consequence of our analysis in sections \ref{5.4} and \ref{pictorialx} is that the measure concentrates on neighbourhoods of about $1/\sqrt N$ of radius around asymmetric equators
collapsing over $a$, while momenta behaves exactly as in the previous example. Therefore, this looks as if the equators, and the fixed point behave as attractors for the particles. This could be how the concentration seems to contribute to
the gravitational effects.


\section*{Acknowledgments}
We would like to thank Riccardo Re and Vladimir Pestov for several valuable discussions, and Vittorino Pata for some very helpful suggestions.


\end{document}